\documentclass[12pt]{article}

\usepackage{amsmath,amssymb,amsfonts}
\usepackage{amsthm}%
\usepackage{url}%
\usepackage{enumitem}
\usepackage[T1]{fontenc}
\usepackage{mathscinet}

\usepackage{graphicx}
\usepackage{mathrsfs}
\usepackage{enumerate}

\allowdisplaybreaks%

{\theoremstyle{plain}%
 \newtheorem{theorem}{Theorem}
 
 \newtheorem{lemma}{Lemma}%
}
{\theoremstyle{remark}

}
{\theoremstyle{definition}

\newtheorem{example}{Example}
}

\begin{document}

\begin{center}
 {\large On a problem on partitions into distinct primes}

 \ 

{\sc John M. Campbell}

\vspace{0.1in}

{\footnotesize Department of Mathematics and Statistics}

{\footnotesize Dalhousie University}

{\footnotesize Halifax, NS B3H 4R2}

{\footnotesize Canada}

\vspace{0.1in}

{\footnotesize {\tt jh241966@dal.ca}}

\vspace{0.1in}

\end{center}

\begin{abstract}
 For a positive integer $n$, we write $L(n)$ in place of the maximum number of distinct primes whose sum is $n$, assuming there is a 
 way of writing $n$ as a sum of distinct primes, with $L(n) = 0$ otherwise. Again if $n$ admits a partition into distinct primes, then we let 
 $K(n)$ denote the number of parts in the unique partition that has a largest product, with $K(n) = 0$ otherwise. In the latest editions 
 of Guy's classic text on unsolved problems in number theory, the problem as to whether or not $L(n) - K(n)$ is unbounded
 was left open, and it appears that this has remained open. We solve this problem in the affirmative. Our proof relies, to a large extent, on a 
 consequence of Vinogradov's theorem together with a 1948 result due to Sierpi\'nski on  the existence of primes whose gaps to the 
 immediately preceding and following primes both exceed any prescribed positive bound. 
\end{abstract}

\vspace{0.1in}

\noindent {\footnotesize \emph{MSC:} 11P32, 11P81}

\vspace{0.1in}

\noindent {\footnotesize \emph{Keywords:} integer partition, Vinogradov's theorem, prime sum, prime product, integer sequence}

\section{Introduction}\label{secintro}
 The study of partitions of integers into primes is of importance within many areas in additive number theory and in the number-theoretic 
 study of integer partitions. This is evidenced by the roles played by Goldbach-type problems, not to mention research on asymptotic 
 properies concerning the growth of prime partitions. In this paper, we solve, by building on our extensive interactions with GPT-6 
 Astra Pro, a problem that is given in the latest editions of Guy's classic texts on unsolved problems    in number theory \cite[pp.\  
 258--259]{Guy1994} \cite[p.\ 394]{Guy2004}    and that concerns partitions into primes    and that seems to have remained open. 

 Guy \cite[pp.\ 258--259]{Guy1994} \cite[p.\ 394]{Guy2004}    noted that Riddell and Taylor posed the following problem:     Given any  
  partitions of a positive integer $n$ into distinct primes, 
 is it necessarily the case that the unique such partition with a maximal 
 \emph{product} of (distinct) parts is among the partitions of $n$ into (distinct) primes 
 with a maximum \emph{number} of parts? 
 This was answered in the negative, via an explicit counterexample, 
 by Selfridge, and McCranie later determined
 that Selfridge's counterexample is the least counterexample. 
 Guy then asked whether the cardinalities of the two sets suggested in the 
 preceding question can differ by an arbitrarily large amount, and this is clarified below. 
 This question is given as being open in both the latest edition of Guy's text from 2004
 and in the preceding version from 1994. 

 Let $n$ be a positive integer. Suppose that $n$ admits a partition into distinct primes. Among all finite sets of (distinct) primes whose 
 elements sum to $n$, we write $P_n$ to denote the set for which
 the product of the elements is greatest, noting that this is unique by the Fundamental Theorem of Arithmetic. 
 Now, let $S_n$ be any (fixed) finite set that is of maximum cardinality 
 and that consists of (distinct) primes summing to $n$ 
 (noting that the value $|S_n|$ is unique and well defined, 
 regardless of how $S_n$ is chosen according to the given conditions). 
 We then write $K(n) = |P_n|$ and $L(n) = |S_n|$. 
 If $n$ is such that there are no partitions of $n$ into distinct primes, or if $n = 0$, 
 then we let $L(n) = K(n) = 0$. 

 The above problem from Guy's monograph on unsolved problems in number theory
 \cite[\S{F}19]{Guy1994} \cite[\S{F}19]{Guy2004}
 asks (in an equivalent way) whether or not 
 $L(n) - K(n)$ is unbounded. This is equivalent to the proposition 
\begin{equation}\label{quantifyGuy} 
 \forall r \in \mathbb{N} \, \exists n \in \mathbb{N} \ L(n) - K(n) \geq r. 
\end{equation}
 We solve this problem in the affirmative in Section \ref{full} below. 

\subsection{Background}
 The integer sequence $(L(n) : n \in \mathbb{N}_{0})$ defined above agrees with the entry in the On-line Encyclopedia of Integer 
 Sequences\footnote{See \url{https://oeis.org/}.} indexed as {\tt A261153}. It seems that the integer sequence $(K(n) : n \in 
 \mathbb{N}_{0})$, according to any equivalent version of the definition above for this sequence, 
 is not currently indexed in the OEIS. 
 The OEIS entry {\tt A053020}
 is defined, in an equivalent way, 
 as the sequence of integers $n$ 
 for which $K(n)$ and $L(n)$ differ, 
 beginning with $(319, 372, 492, \ldots)$. 
 Based on the extant material in the OEIS entry {\tt A053020}, 
 it seems that Guy's problem, as formulated above, has 
 not been solved previously. 

 A number of past research efforts have concerned 
 topics related to 
 Guy's problem, as formulated above. 
 Notably, Del\'eglise and Nicolas \cite{DelegliseNicolas2013,DelegliseNicolas2015}
 investigated the behavior of 
\begin{equation}\label{DNmax}
 h(n) = \max\left\{ \prod_{p \in A} p \ \big| \ \text{$A$ is a finite set of primes}, \ \sum_{p \in A} p \leq n \right\}. 
\end{equation}
 Observe that: For each positive integer $n$
 admitting a partition into distinct primes, the unique prime set attaining
 $$ \max\left\{ \prod_{p \in A} p \ \big| \ \text{$A$ is a finite set of primes}, \ \sum_{p \in A} p = n \right\}, $$
 as opposed to \eqref{DNmax}, is of cardinality $K(n)$ (according to our definition of $K(n)$ above). 
 Letting $\omega(k)$ denote the number of distinct prime factors of a positive integer $k$, 
 it follows from the work 
 of Del\'eglise and Nicolas \cite[Theorem 6.1 and Corollary 6.1]{DelegliseNicolas2013}
 that $$ \omega(h(n)) = \max_{0 \leq m \leq n} L(m), $$ 
 and Del\'eglise and Nicolas subsequently provided a related 
 identity for $\omega(h(n))$ 
 \cite[Theorem 2]{DelegliseNicolas2015}. Thus the material from Del\'eglise--Nicolas
 is not directly applicable to the Guy problem stated above. 

 Also, Kumar \cite{Kumar2021} 
 studied generating functions and related identities for the norms, 
 or products of parts, of prime partitions in which repeated parts are permitted. 
 The enumeration of partitions into primes has also been studied asymptotically; see, for example, Vaughan’s work \cite{Vaughan2008}.

 For each positive integer \(n\) admitting a partition into 
 distinct primes, the minimum product of such a partition 
 is the term of index \(n\) in the OEIS sequence {\tt A064502}; the entry assigns zero when no such partition exists.

\section{A full solution}\label{full}
 For a positive integer $n$ such that there exists at least one partition of $n$ into distinct primes, 
 the set $P_n$ is one of the prime sets associated with the cardinality maximization 
 used to define $S_n$. Therefore, 
 $K(n) \leq L(n)$, and we see that the same relation holds 
 for the remaining values for nonnegative integers $n$, for which $K(n)$ and $L(n)$ vanish. 

 Let $A$ be a finite set consisting of integers exceeding $1$. We then write $s(A) = \sum_{a \in A} a$ and $v(A) = \prod_{a \in A} a$, with 
 the understanding that $s(\varnothing) = 0$ and that $v(\varnothing) = 1$. 
 An inductive argument can be used to verify that 
\begin{equation}\label{sAleqvA} 
 s(A) \leq v(A). 
\end{equation}
 
 Now, suppose that $A$, as defined above, consists of primes. Then 
 we say that $A$ is   \emph{product-minimal for its sum} or simply \emph{product-minimal} 
 if $v(A) \leq 
 v(B)$ for each finite set $B$ of primes such that $s(B) = s(A)$. 
 Also observe that: For each positive integer $n$
 admitting a partition into distinct primes, 
 a set of primes summing to $n$
 with a minimal product exists and is unique (by the Fundamental Theorem of Arithmetic). 
 
 The following result is given in an equivalent way 
 in a 1948 paper due to Sierpi\'{n}ski  \cite{Sierpinski1948},  
  and this, in turn, was proved using 
 Dirichlet's theorem on primes in arithmetic progressions \cite{Dirichlet1837}. 
 For brevity, we omit repeating Sierpi\'{n}ski's argument. 

\begin{lemma}\label{qHlemma}
 (Sierpi\'nski, 1948) For each positive integer $H$, there are arbitrarily large primes $q > H$ such that $q$
 is the unique prime in 
$ [q-H, q+H]$ \cite{Sierpinski1948}. 
\end{lemma}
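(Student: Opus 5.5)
The plan is the classical Chinese Remainder Theorem construction combined with Dirichlet's theorem on primes in arithmetic progressions, in the spirit of Sierpi\'nski's original argument. Fix $H \geq 1$ and let $p_1 < p_2 < \cdots < p_m$ be the primes not exceeding $2H+1$. We want an integer $q$ such that every integer $q+j$ with $1 \leq |j| \leq H$ is forced to be composite by divisibility, while $q$ itself remains free to be prime. The approach is to assign to each nonzero offset $j \in [-H, H]$ a distinct prime modulus $\ell_j$, with all these moduli different from one another. We then impose $q \equiv -j \pmod{\ell_j}$, so that $\ell_j \mid q+j$.

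To keep the congruences compatible with $q$ being prime, we choose the moduli $\ell_j$, for $j = \pm 1, \ldots, \pm H$, to be $2H$ distinct primes, each exceeding $2H$. The system $q \equiv -j \pmod{\ell_j}$ then has a solution $q \equiv a \pmod{M}$ by the Chinese Remainder Theorem, where $M = \prod_j \ell_j$. For each $j$ we have $a \equiv -j \not\equiv 0 \pmod{\ell_j}$, because $0 < |j| \leq H < \ell_j$. Hence $\gcd(a, M) = 1$.

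By Dirichlet's theorem \cite{Dirichlet1837}, the progression $a + kM$ contains infinitely many primes $q$. In particular we may take $q$ as large as desired, and in particular $q > H + \max_j \ell_j$. For such $q$ and any $j$ with $1 \leq |j| \leq H$, we have $\ell_j \mid q+j$. We also have $q+j \geq q-H > \ell_j$, so $q+j$ is a proper multiple of $\ell_j$ and is therefore composite. Thus $q$ is the unique prime in $[q-H, q+H]$, and the supply of such $q$ is unbounded.

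The only point needing care, and the step I expect to be the main obstacle, is ensuring that the residue $a$ is coprime to the modulus, so that Dirichlet applies. The choice of moduli larger than $2H$ is made precisely for this. Choosing instead the small primes up to $2H+1$ would force one to argue that $q$ avoids $0$ modulo each of them, which is delicate. Taking fresh large primes sidesteps this entirely. Everything else is routine.
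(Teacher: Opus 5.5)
Your proof is correct: the CRT system $q \equiv -j \pmod{\ell_j}$ uses $2H$ distinct primes $\ell_j > 2H$, so its solution class $a \bmod M$ is coprime to $M$, Dirichlet's theorem then supplies arbitrarily large primes $q$ in that class, and the condition $q - H > \max_j \ell_j$ makes each $q+j$ with $1 \leq |j| \leq H$ a proper multiple of $\ell_j$, hence composite. The paper omits the argument and only notes that Sierpi\'nski's proof rests on Dirichlet's theorem, so your construction follows essentially that route with the details filled in; the opening list of primes up to $2H+1$ is never used and can be dropped.
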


\begin{example}
 For $H = 6$, we may verify that the smallest primes $q$ as in 
 Lemma \ref{qHlemma} 
 are as in Table \ref{TableqH}. 
 The integer sequence in the second column in 
 Table \ref{TableqH} agrees with the OEIS sequence {\tt A137870}. 

 \begin{table}[t]
\centering

 \begin{tabular}{ | c | c | c | }
 \hline			
 $H$ & $q$ & $[q-H, q+H]$ \\
 \hline 
 6 & 211 & $[205, 217]$ \\ 
 6 & 293 & $[287, 299]$ \\ 
 6 & 409 & $[403, 415]$ \\ 
 6 & 479 & $[473, 485]$ \\ 
 6 & 631 & $[625, 637]$ \\
 6 & 691 & $[685, 697]$ \\
 6 & 701 & $[695, 707]$ \\
 6 & 709 & $[703, 715]$ \\
 6 & 719 & $[713, 725]$ \\
 6 & 787 & $[781, 793]$ \\
 \hline 
\end{tabular}

 \ 

\caption{An illustration of Lemma \ref{qHlemma}, 
 for $H = 6$ and for the smallest associated values for $q$. }\label{TableqH}
\end{table}

\end{example}

\begin{lemma}\label{lemlargeprimes}
 Let $A$ denote a nonempty and finite set of primes that is product-minimal. 
 Then there are arbitrarily large primes $q > s(A)$
 such that $A \cup \{ q \}$ is product-minimal. 
\end{lemma}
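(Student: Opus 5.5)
Throughout, write $S = s(A)$ and $V = v(A)$. The idea is to take $q$ to be an ``isolated'' prime from Lemma~\ref{qHlemma}, with the isolation radius $H$ chosen large compared with $V$. Then any competing set of primes with sum $S+q$ is forced either to contain $q$ itself, or to consist entirely of primes much smaller than $q$. In the first case product-minimality of $A$ finishes the argument. In the second case a crude splitting bound already makes the product exceed $Vq$.

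Concretely, I would set $H = 4V$ and note that $H \geq 4S > S$ by \eqref{sAleqvA}. By Lemma~\ref{qHlemma} there are arbitrarily large primes $q > H$ (hence $q > S$, so $q \notin A$) such that $q$ is the only prime in $[q-H, q+H]$. Fix such a $q$, and let $B$ be any finite set of primes with $s(B) = S + q =: N$. We must show $v(B) \geq Vq = v(A \cup \{q\})$. Let $p = \max B$. Since $p \leq N = S + q < q + H$, the isolation of $q$ leaves exactly two cases: $p = q$, or $p < q - H$.

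If $p = q$, then $B \setminus \{q\}$ is a set of primes with sum $S$. Product-minimality of $A$ gives $v(B\setminus\{q\}) \geq V$, hence $v(B) \geq Vq$.

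If $p < q - H$, every element of $B$ is less than $q - H$. I would split $B$ greedily into two sets $C, D$: process the elements in decreasing order and put each into whichever part currently has the smaller sum. Standard reasoning shows the final sums differ by at most $p$. Hence
\[
\min(s(C), s(D)) \geq \frac{N - p}{2} > \frac{S + H}{2} \geq \frac{H}{2}
\quad\text{and}\quad
\max(s(C), s(D)) \geq \frac{N}{2} \geq \frac{q}{2}.
\]
In particular both parts are nonempty. Applying \eqref{sAleqvA} to each part gives
\[
v(B) = v(C)\,v(D) \geq s(C)\,s(D) \geq \frac{H}{2}\cdot\frac{q}{2} = Vq .
\]
So $A \cup \{q\}$ is product-minimal for every such $q$, and there are arbitrarily large such $q$.

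The main obstacle is the second case: turning ``all parts are far below $q$'' into a product lower bound of order $Vq$. A single application of \eqref{sAleqvA} only yields a bound of order $q$. The balanced two-part split is the step that makes this work, by converting the sum into a product of two sums, one at least of order $H$ and the other at least of order $q$. The delicate point is the balancing estimate, namely that the greedy split leaves a difference of at most the largest element $p$. I would verify this by a short induction on the elements processed in decreasing order.
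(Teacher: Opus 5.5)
Your proof is correct, and it takes a different route from the paper's.

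\textbf{The paper's route.} The paper uses the isolation radius $H = 4v(A)^2$ and argues from the product side. For a competitor $B$ with $v(B)\le q\,v(A)$, it bounds $R = v(B\setminus\{\max B\})$ by $R < |B|\,v(A)$ and $|B| \le 2\sqrt{R}$; the latter comes from $R\ge 2^{|B|-1}$ and $\ell^2\le 2^{\ell+1}$. This gives $R < H$, so the non-maximal elements sum to less than $H$. Hence $\max B$ lies within $H$ of $q$, and isolation forces $\max B = q$.

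\textbf{Your route.} You apply isolation first, reducing to the dichotomy $\max B = q$ or $\max B < q-H$. You dispose of the second alternative with a balanced bipartition and $s\le v$, obtaining $v(B)\ge s(C)\,s(D) > \tfrac{H}{2}\cdot\tfrac{q}{2} = q\,v(A)$.

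\textbf{What each buys.} Your argument needs only a radius linear in $v(A)$, namely $H = 4v(A)$ rather than $4v(A)^2$, and it avoids the counting inequality for $|B|$. It also directly yields $v(B)\ge q\,v(A)$ for every competitor, which is exactly the definition of product-minimality. The paper's argument gives a slightly more structural conclusion: any competitor with product at most $q\,v(A)$ must contain $q$ and have a remainder of small product. That extra information is not needed, since uniqueness of the minimizer is automatic by unique factorization anyway.

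\textbf{On the balancing step.} The claim you flag as delicate is immediate, and it does not need the decreasing order. If the current difference of the two sums is $\delta\le p$ and an element $b\le p$ is added to the lighter side, the new difference is $|\delta-b|\le\max(\delta,b)\le p$.
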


\begin{proof}
 Define $H = 4 v^2(A)$. 
 According to 
 Lemma \ref{qHlemma}, 
 it is possible to select an arbitrarily large prime $q > \max\{ s(A), H \}$
 that is the unique prime in $[q - H, q + H]$. 
 Since each member of $A$ is at most $s(A) < q$, we see that $q \not\in A$. 
 Consequently, the relations
 among 
\begin{equation}\label{sAcupq}
 s(A \cup \{ q \}) = s(A) + q
\end{equation}
 and 
\begin{equation}\label{vAcupq}
 v(A \cup \{ q \}) = q v(A) 
\end{equation}
 hold. 
 So, from the relations in \eqref{sAcupq} and 
 \eqref{vAcupq}, we find that 
 there is at least one finite set $B$ of primes
 such that 
\begin{equation}\label{sBequals} 
 s(B) = s(A) + q
\end{equation}
 and 
\begin{equation}\label{vBleqqM}
 v(B) \leq q \, v(A). 
\end{equation} 
 We then let $B$ be an arbitrary set of this form. 

 We claim that it is necessarily the case that $B = A \cup \{ q \}$. 
 In this direction, we set 
 $R = v(B \setminus \{ \max(B) \})$ and $u = s(B \setminus \{ \max(B) \})$. 
 We thus have that 
\begin{equation}\label{zplusuequals} 
 \max(B) + u = s(A) + q 
\end{equation}
 and 
\begin{equation}\label{zRleqqM}
 \max(B) R \leq q \, v(A). 
\end{equation}
 Of course, each element in $B$ is at most $\max(B)$. 
 This gives us that 
\begin{equation}\label{dplusqleq} 
 s(A) + q \leq |B| \max(B). 
\end{equation}
 A combined application of \eqref{zRleqqM} and 
 \eqref{dplusqleq} gives us that 
\begin{equation}\label{multinequalities}
 R \leq \frac{q \, v(A)}{\max(B)} \leq \frac{|B| q \, v(A)}{s(A) + q} < |B| v(A). 
\end{equation}
 Of course, since $B \setminus \{ \max(B) \}$ has $|B| - 1$ members, 
 each of which exceeds $1$, we find that 
\begin{equation}\label{Rgeq2power}
 R \geq 2^{|B|-1}. 
\end{equation}
 We may verify (using an inductive argument) that 
\begin{equation}\label{squareleqpower}
\ell^2 \leq 2^{\ell+1} 
\end{equation}
 holds for each positive integer $\ell$. 
 A combined application of \eqref{Rgeq2power} and \eqref{squareleqpower} then produces 
\begin{equation}\label{kleq2sqrtR} 
 |B| \leq 2 \sqrt{R}. 
\end{equation}
 From \eqref{multinequalities} and \eqref{kleq2sqrtR} together, we deduce that 
\begin{equation}\label{Rless2Mroot} 
 R < 2 v(A) \sqrt{R}. 
\end{equation}
 Observe that $v(A) > 0$. 
 So, the relation in 
 \eqref{Rless2Mroot} leads us to 
 $ R < 4 v^2(A) = H$. By then exploiting the relation in \eqref{sAleqvA}, we arrive at the relations among
\begin{equation}\label{uleqRlessH} 
 u \leq R < H 
\end{equation}
 and 
\begin{equation}\label{dleqMlessH}
 s(A) \leq v(A) < H. 
\end{equation}
 Using \eqref{zplusuequals}, \eqref{uleqRlessH}, and \eqref{dleqMlessH} together, 
 this gives us that 
\begin{equation}\label{equalsabs} 
 |\max(B) - q| = |s(A) - u| < H. 
\end{equation}
 Now, observe that $\max(B)$ is prime. 
 Also, recall that we chose $q$ as the unique prime 
 in $[q - H, q + H]$, by virtue of Lemma \ref{qHlemma}. 
 So, from \eqref{equalsabs}, 
 we can conclude that the primes $\max(B)$ and $q$ are necessarily equal. 
 So, from \eqref{sBequals} and \eqref{vBleqqM}, respectively, 
 we can deduce that 
 $ s(B \setminus \{ q \}) = s(A) $ 
 and that 
 $ v(B \setminus \{ q \}) \leq v(A)$. 
 The given assumption on $A$ concerning product-minimality allows us 
 to conclude that 
 $ v(B \setminus \{ q \}) = v(A)$, 
 whereas the Fundamental Theorem of Arithmetic 
 gives us that 
 $ B \setminus \{ q \} = A$. 
 Consequently, the equality $B = A \cup \{ q \}$ holds. 
 Since no other (finite) prime set summing to $s(A) + q$ 
 is such that the product of its elements is at most $q \, v(A)$, 
 we obtain the desired result. 
\end{proof}

\begin{example}
 Let $A = \{ 3, 5, 19 \}$. It can be shown that this is product-minimal. 
 Define $d = s(A)$. 
 It can be shown that the smallest prime $q > d$
 such that
 $A \cup \{ q \}$ is product-minimal is 3,323,497. 
 The next smallest such primes are shown in Table \ref{TableAq}. 

\begin{table}[t]
\centering

 \begin{tabular}{ | c | c | }
 \hline			
 $A$ & $q$ \\
 \hline 
 $\{ 3, 5, 19 \}$ & 3,323,497 \\ 
 $\{ 3, 5, 19 \}$ & 6,200,989 \\ 
 $\{ 3, 5, 19 \}$ & 6,579,137 \\ 
 $\{ 3, 5, 19 \}$ & 7,686,541 \\ 
 $\{ 3, 5, 19 \}$ & 8,310,097 \\
 $\{ 3, 5, 19 \}$ & 8,707,537 \\
 $\{ 3, 5, 19 \}$ & 9,675,451 \\
 $\{ 3, 5, 19 \}$ & 10,631,471 \\
 $\{ 3, 5, 19 \}$ & 10,861,867 \\
 $\{ 3, 5, 19 \}$ & 10,894,189 \\
 \hline 
\end{tabular}

 \ 

\caption{An illustration of Lemma \ref{lemlargeprimes}, 
 for $A = \{ 3, 5, 19 \}$ and for the smallest associated values for primes $q$. }\label{TableAq}
\end{table}

\end{example}

\begin{lemma}\label{lemmatd}
 For each integer $t$ exceeding $1$, there exist arbitrarily 
 large odd integers $d$ whose product-minimal prime set is of size $t$. 
\end{lemma}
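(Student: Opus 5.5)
Iterating Lemma~\ref{lemlargeprimes} $t-1$ times, starting from a product-minimal singleton, should produce product-minimal sets of every size $t$ with arbitrarily large sums; the only extra work is controlling the parity of the sum.

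\emph{Base case.} The singleton $\{p\}$ for a prime $p$ is product-minimal. Indeed, any set $B$ of primes with $s(B)=p$ and $|B|\ge 2$ satisfies $v(B)\ge s(B)=p$ by \eqref{sAleqvA}. Equality would require that set to have product equal to its sum. For two elements $a<b$ we have $ab>a+b$ once $a\ge 2$ and $b\ge 3$, and the same strict inequality extends inductively to larger sets. So $v(B)>p$, and $\{p\}$ is the unique minimizer. I would take $A_1=\{2\}$, but I will revisit this choice once parity is examined.

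\emph{Induction.} Apply Lemma~\ref{lemlargeprimes} to $A_1$ to get a prime $q_2>s(A_1)$ with $A_2=A_1\cup\{q_2\}$ product-minimal. Apply it again to $A_2$, and continue until we have a product-minimal set $A_t$ of size $t$. Since each new prime may be chosen arbitrarily large, $d=s(A_t)$ can be made arbitrarily large. Product-minimality of $A_t$ means it is the (unique) product-minimal prime set for $d$, so that set has size $t$.

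\emph{Parity.} This is the step I expect to be the only real obstacle. All primes added by Lemma~\ref{lemlargeprimes} exceed $2$, so they are odd. Hence $s(A_t)\equiv s(A_1)+(t-1)\pmod 2$. Starting from $\{2\}$ gives a sum of parity $t-1$, which is odd exactly when $t$ is even. Starting from $\{p\}$ with $p$ an odd prime gives a sum of parity $t$, which is odd exactly when $t$ is odd. So I choose $A_1=\{2\}$ when $t$ is even and $A_1=\{3\}$ when $t$ is odd. In either case $d$ is odd, and the arbitrary largeness of the last prime added makes $d$ arbitrarily large. The hypothesis $t>1$ guarantees that at least one application of Lemma~\ref{lemlargeprimes} occurs, and that is what supplies the unboundedness of $d$.
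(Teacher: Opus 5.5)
Your proposal is correct and follows essentially the same route as the paper. You start from $\{2\}$ or $\{3\}$ according to the parity of $t$, apply Lemma~\ref{lemlargeprimes} $t-1$ times (each appended prime exceeds the previous sum, so it is new and odd), and let the last appended prime be arbitrarily large; your base-case argument via \eqref{sAleqvA} covers every singleton $\{p\}$, whereas the paper simply notes that $2$ and $3$ have only singleton partitions into distinct primes, which is all that is needed here.
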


\begin{proof}
 If $t$ is even, then we begin by setting $A_0 = \{ 2 \}$. 
 If $t$ is odd, then we start with $A_0 = \{ 3 \}$. 
 Observe that both singleton sets are product-minimal for their sums, 
 as each of $2$ and $3$ admit only singleton prime partitions. 

 We then apply Lemma \ref{lemlargeprimes} successively $t-1$ times. 
 With each application of Lemma \ref{lemlargeprimes}, the appended prime is larger than the
 preceding sum, and is thus distinct from all of the previous primes. 
 Also observe that each appended prime is necessarily odd. 

 After completing the above appending procedure, we find that the resulting product-minimal set $A$
 has precisely $t$ elements.
 If $t$ is even, then it consists of $2$ along with $t-1$ odd primes, 
 so the associated sum is odd. 
 If $t$ is odd, it consists of $t$ odd primes, so we again obtain an odd sum. 

 In either case, regardless of the parity of $t$, the final application of 
 Lemma \ref{lemlargeprimes} in the above procedure
 can be taken as being arbitrarily large. So, the resulting odd integer $d = s(A)$ can be
 made to be arbitrarily large. 
\end{proof}

\begin{example}\label{extd}
 For $t = 3$, it may be verified that the smallest odd integer $d$ whose product-minimal prime set
 has exactly $t$ elements is $d = 27$, 
 and the corresponding unique product-minimal prime set is $A = \{ 3, 5, 19 \}$. 
 The next smallest odd integers of the desired form, again for $t = 3$, are shown in 
 Table \ref{Tabletd}. 

\begin{table}[t]
\centering

 \begin{tabular}{ | c | c | c | }
 \hline			
 $t$ & $d$ & $A$ \\
 \hline 
 3 & 27 & $\{ 3, 5, 19 \}$ \\ 
 3 & 35 & $\{ 3, 13, 19 \}$ \\ 
 3 & 51 & $\{ 3, 5, 43 \}$ \\ 
 3 & 57 & $\{ 3, 7, 47 \}$ \\ 
 3 & 65 & $\{ 5, 7, 53 \}$ \\
 3 & 77 & $\{ 3, 7, 67 \}$ \\
 3 & 87 & $\{ 3, 5, 79 \}$ \\
 3 & 93 & $\{ 3, 7, 83 \}$ \\
 3 & 95 & $\{ 5, 7, 83 \}$ \\
 3 & 117 & $\{ 3, 5, 109 \}$ \\
 \hline 
\end{tabular}

 \ 

\caption{An illustration of Lemma \ref{lemmatd}, 
 for $t = 3$ and for the smallest associated values for $d$, 
 with the associated unique product-minimal prime set $A$ being displayed in the final column. }\label{Tabletd}
\end{table}

\end{example}

 \emph{Goldbach's weak conjecture} purports that each odd integer exceeding $5$ can be written as the sum of 
 three not necessarily distinct primes. 
 It is well known that what is referred to as 
 \emph{Vinogradov's theorem} \cite{Vinogradov1937}
 implies that any sufficiently large odd integer can be expressed 
 as a sum of three not necessarily distinct prime numbers. 
 However, we require a refinement of this consequence, 
 as we require the distinctness of the primes in such an expression. 
 
 For a positive integer $N$, define 
\begin{equation}\label{defineR3N}
 R_3(N) = \#\{ (p_1, p_2, p_3) : \text{$p_1, p_2, p_3$ are primes and $p_1 + p_2 + p_3 = N$} \}, 
\end{equation}
 letting it be understood that the tuples involved in 
 \eqref{defineR3N} are ordered and that repeated primes are permitted. 
 Vinogradov proved (cf.\ \cite[Theorem 2]{Vinogradov1937}) that: 
 There are constants $c > 0$ and $N_0 > 0$ such that: For each odd integer $N \geq N_0$, 
 the relation 
\begin{equation}\label{displayVinogradov}
 R_3(N) \geq c \frac{N^2}{(\log N)^3} 
\end{equation}
 holds. This leads us to the following consequence of \eqref{displayVinogradov}. 

\begin{lemma}\label{lemVinogradov}
 Each sufficiently large odd integer is the sum of three pairwise distinct primes (cf.\ \cite{Vinogradov1937}). 
\end{lemma}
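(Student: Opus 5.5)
The plan is to start from the lower bound \eqref{displayVinogradov} and subtract the number of ordered triples in which two entries coincide. If this number is of smaller order than $N^2/(\log N)^3$, then some triple with pairwise distinct entries remains for all large odd $N$.

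The first step is to bound the degenerate triples. Fix an odd integer $N \geq N_0$ and count ordered triples $(p_1,p_2,p_3)$ of primes with $p_1+p_2+p_3 = N$ and at least two equal entries. Suppose $p_i = p_j$ for some pair $i<j$. The repeated value $p$ satisfies $2p \leq N$, so it can take at most $N/2$ values. Once $p$ is fixed, the third entry is forced to equal $N - 2p$. There are three choices of the pair $\{i,j\}$, so the number of degenerate triples is at most $3 \cdot N/2 = 3N/2$. If we want a sharper count, $\pi(N/2)$ can replace $N/2$, but the crude bound is enough.

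The second step is the comparison. The number of triples of pairwise distinct primes summing to $N$ is at least
\begin{equation*}
R_3(N) - \frac{3N}{2} \geq c\frac{N^2}{(\log N)^3} - \frac{3N}{2}.
\end{equation*}
Since $N/(\log N)^3 \to \infty$, the right-hand side is positive for every sufficiently large $N$. Hence there is at least one ordered triple of pairwise distinct primes summing to $N$, and its underlying set is a set of three distinct primes with sum $N$. This holds for every odd $N$ beyond some explicit threshold depending on $c$ and $N_0$.

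No step here is a real obstacle. The only point that needs care is the observation that a repeated prime determines the whole triple, which makes the degenerate count linear in $N$ while the Vinogradov bound is nearly quadratic.
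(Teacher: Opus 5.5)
Your proposal is correct and follows the paper's proof: you bound the ordered triples with a repeated prime linearly in $N$ and compare this with Vinogradov's lower bound $R_3(N) \geq c N^2/(\log N)^3$. The only difference is that your count of $3N/2$ degenerate triples (using $2p \leq N$) is slightly sharper than the paper's $3N$, which changes nothing in the argument.
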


\begin{proof}
 For $R_3(N)$ as defined in \eqref{defineR3N}, 
 we find that there exist at most $N$ triples
 of the form indicated in \eqref{defineR3N}
 such that $p_1 = p_2$, since, by fixing a choice for $p_1$, 
 the final entry is determined
 by $p_3 = N - 2 p_1$. 
 We obtain the same bound for triples satisfying
 $p_1 = p_3$ and those satisfying $p_2 = p_3$. 
 So, there are at most $3N$ triples of the form indicated in \eqref{defineR3N}
 containing a repeated prime. 
 Since $\frac{N}{\log^3 N} \to \infty$ as $N \to \infty$, 
 the formulation of Vinogradov's theorem in \eqref{displayVinogradov}
 gives us that 
 $R_3(N) > 3 N$ for all sufficiently large odd $N$. 
 So, at least one triple associated with 
 $R_3(N) $ (again for sufficiently large odd $N$) 
 necessarily has three pairwise distinct entries. 
\end{proof}

 We next require the following result related 
 to the AM-GM inequality. As below, the expression $e$ denotes the familiar constant 
 often referred to as Euler's number. 

\begin{lemma}\label{eleminequality}
 Let $x \geq 2$, and let $G > 0$. 
 Let $U$ and $V$ be finite sets of integers satisfying 
\begin{equation}\label{2lequleqx}
 2 \leq u \leq x
\end{equation}
 for $u \in U$ and 
\begin{equation}\label{conditionG}
 v \geq x + G
\end{equation}
for $v \in V$
 and $V \neq \varnothing$ and 
\begin{equation}\label{sUminussV} 
 s(U) - s(V) = d > 0. 
\end{equation}
 Then the relations in $ \frac{v(U)}{v(V)} \geq \frac{2}{e^3} (d + G) > \frac{d + G}{11} $ hold. 
\end{lemma}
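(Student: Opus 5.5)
The plan is to take logarithms and compare $U$ and $V$ through one auxiliary function. Put $y = x + G$ and $f(t) = \log t - t/y$. Since $s(U) = s(V) + d$ by \eqref{sUminussV}, we can write
\[
 \log \frac{v(U)}{v(V)} = \sum_{u \in U} f(u) - \sum_{w \in V} f(w) + \frac{d}{y}.
\]
The function $f$ is increasing on $(0, y]$ and decreasing on $[y, \infty)$, with maximum $f(y) = \log y - 1$.

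First, every $w \in V$ satisfies $w \geq y$ by \eqref{conditionG}, so $f(w) \leq \log y - 1$. With $m = |V| \geq 1$ this gives $\sum_{w\in V} f(w) \leq m(\log y - 1)$.

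Second, every $u \in U$ lies in $[2, x] \subseteq [2, y]$, where $f$ is increasing, so $f(u) \geq f(2) = \log 2 - 2/y$. To use this I need a lower bound on $|U|$. Since each $u \leq x$ and $s(U) = s(V) + d \geq my + d$, we get $|U| \geq (my + d)/x$. Combining the two steps,
\[
 \log \frac{v(U)}{v(V)} \geq \frac{my + d}{x}\Bigl(\log 2 - \frac{2}{y}\Bigr) - m(\log y - 1) + \frac{d}{y}.
\]
This inequality is valid only when $\log 2 - 2/y \geq 0$, i.e.\ $y \geq 2/\log 2$. When $y$ is smaller, I would use instead the trivial bound $|U| f(2) \geq$ (a bounded negative quantity), or treat $x < 3$ directly: then $U$ consists of $2$'s, and since $U$ is a set it is at most $\{2\}$, so this case is a short check.

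Third, I would reduce to the case $m = 1$. For $m = 1$ the target is $\log(v(U)/w) \geq \log(d + G) + \log 2 - 3$. To get it I would use the finer bound $\log v(U) \geq \sum_{u\in U}\log u$, together with the observation that a set of integers in $[2, x]$ with sum $S$ has product at least comparable to $e^{(S - x)/x}\, x$ when suitably grouped. More concretely, I would first try the tangent-line bound $\log u \geq 1 + \log(u_0) - \ldots$ and compare with $w \leq s(U) - d$. For general $m$, each additional element of $V$ costs at most $\log y - 1$ in the sum over $V$. At the same time it forces at least $y/x > 1$ additional units of $|U|$, each contributing roughly $\log$ of its size, so the sum over $U$ gains at least as much as the sum over $V$ loses. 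I would make this precise by pairing each $w \in V$ with a block of elements of $U$ whose sum is at least $w$, and using that a product of integers in $[2, x]$ summing to at least $w$ is at least $w/e$ up to a factor of $2$. This pairing bound is where the constants $2/e^3$ come from.

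The main obstacle is exactly this last comparison, namely the one-variable optimisation showing the final expression is at least $\log\bigl(\tfrac{2}{e^3}(d+G)\bigr)$. The quantity $d/y$ alone is too weak when $d \ll y$. In that regime I expect to have to use that $s(U) \geq y + d$ forces $v(U) \geq$ something like $(y + d)\cdot 2/e^{2}$, via the AM-GM-type fact that integers in $[2, x]$ with sum $S$ have product at least $2S/e^2$. Dividing by $w \leq s(U) - d$ should then leave a factor proportional to $d + G$, giving the extra $1/e$ and hence $\tfrac{2}{e^3}$. The final strict inequality $\tfrac{2}{e^3} > \tfrac{1}{11}$ is numerical, since $e^3 < 22$.
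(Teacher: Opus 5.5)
\textbf{There is a genuine gap: your proposal does not yet prove the key inequality, and the concrete tools you offer cannot supply it.}

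\emph{Your explicit bound is too weak.} Your first two steps replace every $f(u)$ by $f(2)$. For $|V| = 1$ and $G = d = 1$, the resulting right-hand side is roughly $1 + \log 2 - \log x$. This tends to $-\infty$ as $x$ grows, while the target $\log(4/e^3)$ is fixed. The reduction to $|V| = 1$ is only asserted, never argued.

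\emph{The block-pairing plan fails outright.} Every $w \in V$ exceeds $x \geq \max U$, so a block of $U$ with sum at least $w$ needs at least two elements. That forces $|U| \geq 2|V|$, which is false in general. For example, take $x = 10$, $G = 1$, $U = \{5, 9, 10\}$, $V = \{11, 12\}$, $d = 1$.

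\emph{Your closing heuristic cannot produce the factor $d+G$.} From $v(U) \geq c\, s(U)$ and $w = s(U) - d$ you only get $v(U)/w \geq c\, s(U)/(s(U)-d)$. This is bounded and tends to $c$ as $s(U)$ grows. No lower bound for $v(U)$ in terms of $s(U)$ alone can work; you must split $U$ into two parts.

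\textbf{The missing idea is a one-to-one pairing, with the ratios controlled collectively rather than individually.} This is what the paper does, and it handles every $|V|$ at once.
\begin{enumerate}
\item Note that $|U| > |V|$, since otherwise $s(U) \leq |V|x < s(V)$. Write $U = \{u_1 < \cdots < u_{a+|V|}\}$ with $a \geq 1$, write $V = \{v_1 < \cdots < v_{|V|}\}$, and let $\tau = u_1 + \cdots + u_a$.
\item The sum condition gives $\tau - d = \sum_j (v_j - u_{a+j}) \geq |V| G$, so $\tau \geq d + G$. This is where $d+G$ comes from: it is the sum of the $a$ smallest elements of $U$ left over after the largest $|V|$ elements of $U$ are matched with $V$.
\item The individual ratios $v_j/u_{a+j}$ may be large. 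However, using $u_{a+j} \geq u_a$ and $\tau \leq a u_a$, one gets $\sum_j v_j/u_{a+j} \leq |V| + (\tau - d)/u_a \leq |V| + a$. AM--GM then gives $\prod_j v_j/u_{a+j} \leq (1 + a/|V|)^{|V|} \leq e^a$.
\item Distinctness of the elements of $U$, which your argument never uses, gives $u_1 \cdots u_a \geq a!\, u_a \geq (a-1)!\,\tau$.
\item Finally, $(a-1)!\,e^{-a} \geq 2/e^3$ for every $a \geq 1$, with the minimum at $a = 3$.
\end{enumerate}
Together these yield $v(U)/v(V) \geq \frac{2}{e^3}\tau \geq \frac{2}{e^3}(d+G)$.
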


\begin{proof}
 We begin by noting that $|V| \geq 1$, from one of the assumptions given above. 
 Also, we observe that it is necessarily the case that $|U| > |V|$, because, otherwise, 
 we would have that 
 $s(U) \leq |V| x < |V|(x + G) \leq s(V)$, 
 but this contradicts the hypothesis in \eqref{sUminussV}. 
 So, there exists a positive integer $a$ such that 
 the elements in $U$ and $V$ can be written so that 
\begin{equation}\label{orderU} 
 U = \{ u_1 < u_2 < \cdots < u_{a+|V|} \} 
\end{equation}
 and so that 
 $ V = \{ v_1 < v_2 < \cdots < v_{|V|} \}$. By then writing 
\begin{equation}\label{tausumu} 
 \tau = \sum_{i=1}^{a} u_{i}. 
\end{equation}
 The condition in \eqref{sUminussV} then gives us that 
\begin{equation}\label{tauminusd}
 \tau - d = \sum_{j=1}^{|V|} (v_j - u_{a + j}). 
\end{equation}
 The hypothesis associated with \eqref{conditionG} gives us that each 
 difference on the right of \eqref{tauminusd} is at least $G$, 
 and we thus obtain that 
\begin{equation}\label{taugeqdplusbG} 
 \tau \geq d + |V| G \geq d + G. 
\end{equation}
 Observe that $u_{a + j} \geq u_a$ (recalling \eqref{orderU}). 
 Also, from 
 \eqref{orderU} and \eqref{tausumu}, 
 we can deduce that $\tau \leq a u_{a}$. So, this leads us to the relations such that 
\begin{align*}
 \sum_{j=1}^{|V|} \frac{v_{j}}{u_{a+j}} 
 & = |V| + \sum_{j=1}^{|V|} \frac{v_{j} - u_{a + j}}{u_{a+j}} \\ 
 & \leq |V| + \frac{\tau - d}{u_{a}} \\
 & \leq |V| + a.
\end{align*}
 By applying the arithmetic-geometric mean inequality
 to the $|V|$ positive numbers
 of the form $\frac{v_{j}}{u_{a + j}}$ for $j \in \{ 1, 2, \ldots, |V| \}$, 
 and by then applying the elementary inequality 
 $\big( 1 + \frac{a}{|V|} \big)^{|V|} \leq e^a$, we find that 
 $$ \prod_{j=1}^{|V|} \frac{v_{j}}{u_{a+j}} \leq \left( \frac{1}{|V|} \sum_{j=1}^{|V|} \frac{v_{j}}{u_{a + j}} \right)^{|V|}
 \leq \left( 1 + \frac{a}{|V|} \right)^{|V|} \leq e^{a}. $$
 As a consequence, we obtain that 
\begin{equation}\label{prodquot}
 \frac{v(U)}{v(V)} 
 = \frac{ \prod_{i=1}^{a} u_{i} }{ \prod_{j=1}^{|V|} \frac{v_{j}}{u_{a + j}} } 
 \geq e^{-a} \prod_{i=1}^{a} u_{i}.
\end{equation}
 The expressions among $u_1$, $u_2$, $\ldots$, $u_a$
 are pairwise unequal and all exceed $1$ (recalling \eqref{2lequleqx}), 
 and thus $u_{i} \geq i + 1$. Consequently, we have that 
\begin{equation}\label{geqfactorial}
 \prod_{i=1}^{a} u_{i} \geq u_{a} \prod_{i=1}^{a-1} (i+1) 
 = a! u_{a} \geq (a-1)! \tau. 
\end{equation}
 Now, observe that 
 the ratio of consecutive terms 
 in the sequence of expressions of the form 
 $(\alpha - 1)! e^{-\alpha}$ for $\alpha \in \mathbb{N}$ is $\alpha/e$, 
 and thus its minimum occurs at $\alpha = 3$. 
 Consequently, the relation 
\begin{equation}\label{fromratio} 
 \frac{(\alpha-1)!}{e^\alpha} \geq \frac{2}{e^3} 
\end{equation}
 holds for positive integers $\alpha$. 

 Now, a combined application of \eqref{taugeqdplusbG}, 
 \eqref{prodquot}, \eqref{geqfactorial}, and \eqref{fromratio} gives us that 
 $ \frac{v(U)}{v(V)} \geq \frac{(a-1)!}{e^a} \tau \geq 
 \frac{2}{e^3} \tau \geq \frac{2}{e^3} (d + G)$. 
 Since $\frac{2}{e^3} > \frac{1}{11}$, we obtain the desired result. 
\end{proof}

 Now, for a given prime $x$, define
 $ \mathcal{B}_{x} = \{ p : \text{$p$ is prime and $p \leq x$} \}$. 
 Also, we define
 $\sigma_{x} = s(\mathcal{B}_{x})$ and $T_{x} = v(\mathcal{B}_{x})$, 
 and this leads us toward the following Lemma. 

\begin{lemma}\label{lemmaPn}
 We write $A$ in place of the product-minimal prime set of sum $d > 0$, and we then let $M = v(A)$. Now, let $x > d$ be a prime, and 
 let $y$ be the next prime after $x$. 
 Also, let $G = y - x$, 
 and assume that 
\begin{equation}\label{Gstrict}
 G > 11 M.
\end{equation} 
 Also, let $n = \sigma_x -d$. Then $n$ admits a partition into distinct primes, 
 and its unique maximum-product prime set is
 $\mathcal{B}_{x} \setminus A$, which, moreover, 
 equals $P_n$, for $P_n$ as defined in Section \ref{secintro}. 
\end{lemma}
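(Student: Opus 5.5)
First, $n$ admits a partition into distinct primes. Since $x > d \geq \max(A)$, every element of $A$ is a prime at most $x$, so $A \subseteq \mathcal{B}_x$. Hence $\mathcal{B}_x \setminus A$ is a set of distinct primes with sum $\sigma_x - d = n$, and $n$ has at least one such partition. Its product is $T_x/M$.

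To show this set has the maximum product, let $C$ be an arbitrary finite set of primes with $s(C) = n$ and $C \neq \mathcal{B}_x \setminus A$. We must show $v(C) < T_x/M$. Split $C$ into $C_{\le} = C \cap \mathcal{B}_x$ and $V = C \setminus \mathcal{B}_x$. Every element of $V$ is a prime exceeding $x$, hence at least $y = x + G$. Let $D = \mathcal{B}_x \setminus C_{\le}$, so $C_{\le} = \mathcal{B}_x \setminus D$. Then
\[
s(C) = \sigma_x - s(D) + s(V) = \sigma_x - d
\]
gives $s(D) - s(V) = d$, and $v(C) = T_x\, v(V)/v(D)$. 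So it suffices to prove $v(D)/v(V) > M$ whenever $(D,V) \neq (A,\varnothing)$.

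\emph{Case $V = \varnothing$.} Then $s(D) = d$, and $D$ is a set of primes summing to $d$ with $D \neq A$. By the product-minimality of $A$ and the uniqueness of the product-minimal set (Fundamental Theorem of Arithmetic), $v(D) > v(A) = M$, as required.

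\emph{Case $V \neq \varnothing$.} Apply Lemma \ref{eleminequality} with $U = D$ and the given $x \ge 2$ and $G$. Its hypotheses hold: elements of $D$ lie in $[2, x]$, elements of $V$ are at least $x + G$, $V$ is nonempty, and $s(U) - s(V) = d > 0$. The lemma yields
\[
\frac{v(D)}{v(V)} > \frac{d + G}{11} > \frac{G}{11} > M
\]
by \eqref{Gstrict}. So $v(C) < T_x/M$ in this case too.

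Combining the two cases, $\mathcal{B}_x \setminus A$ is the strict unique maximizer of the product among partitions of $n$ into distinct primes. Hence it equals $P_n$ by definition.

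The main obstacle is setting up the reduction so that Lemma \ref{eleminequality} applies verbatim. One must confirm $A \subseteq \mathcal{B}_x$ (using $x > d$) and express $C$ through the pair $(D, V)$ with the right sign of $s(D) - s(V)$. After that, the case split on whether $V$ is empty is routine.
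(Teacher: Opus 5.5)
Your proof is correct and matches the paper's argument essentially step for step: your $D = \mathcal{B}_x \setminus C$ and $V = C \setminus \mathcal{B}_x$ are exactly the paper's $U$ and $V$, with the same identities $s(D) - s(V) = d$ and $v(C) = T_x\, v(V)/v(D)$, and the same case split. When $V = \varnothing$ you use product-minimality plus the Fundamental Theorem of Arithmetic, and when $V \neq \varnothing$ you use Lemma \ref{eleminequality} together with $G > 11M$. The only difference is cosmetic: you fix $C \neq \mathcal{B}_x \setminus A$ and prove the strict inequality directly, whereas the paper proves $v(E) \leq T_x/M$ for every $E$ and then characterizes when equality holds.
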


\begin{proof}
 Each element of $A$ is at most $d < x$. 
 We thus have that $A \subseteq \mathcal{B}_{x}$. 
 So, the difference $\mathcal{B}_{x} \setminus A$ consists of distinct primes
 and is such that 
 $s(\mathcal{B}_{x} \setminus A) = \sigma_{x} - d = n$. 
 Also observe that $\mathcal{B}_{x} \setminus A$ is nonempty, since $x \not\in A$. 
 By construction, we also have that 
 $ v(\mathcal{B}_{x} \setminus A) = \frac{T_{x}}{M}$. 
 
 Now, let $E$ denote an arbitrary finite set of primes such that $s(E) = n$. 
 We then write $U = \mathcal{B}_{x} \setminus E$ and $V = E \setminus \mathcal{B}_{x}$. 

 From the above definitions of $U$ and $V$, the set $E$ is obtained from $\mathcal{B}_{x}$
 by removing the elements of $U$ and adjoining the elements of $V$, i.e., so that 
\begin{equation}\label{sEequals}
 s(E) = s(\mathcal{B}_{x}) - s(U) + s(V) = \sigma_{x} - s(U) + s(V). 
\end{equation}
 Also, from the choice of $E$ and the definition of $n$, we   find that   $s(E) = n = \sigma_{x} - d$. This and \eqref{sEequals}    together give  
  us that 
\begin{equation}\label{complementsumidentity}
 s(U) - s(V) = d. 
\end{equation} 
 Our construction also gives that 
\begin{equation}\label{vEandTx}
 v(E) = T_x \frac{v(V)}{v(U)}. 
\end{equation}
 This leads us toward the following case analysis. 

 \ 

\noindent {\bf Case 1:} First suppose that $V = \varnothing$. From \eqref{complementsumidentity}, we obtain that $s(U) = d$. Now, since 
 $A$ is product-minimal  for prime sets summing to $d$, we 
 find that $v(U) \geq M$. So, from \eqref{vEandTx}, we find that 
\begin{equation}\label{vEleq} 
 v(E) \leq \frac{T_{x}}{M}. 
\end{equation}
 Equality in \eqref{vEleq} holds if and only if $v(U) = M = v(A)$. By the Fundamental Theorem
 of Arithmetic, this is equivalent to $U = A$. Since $V = \varnothing$, 
 it follows that equality holds if and only if $E = \mathcal{B}_{x} \setminus A$. 

 \ 

\noindent {\bf Case 2:} Now, suppose that $V$ is nonempty. Observe that each element of $U$ is in $[2, x]$. 
 Each element of $V$ is a prime greater than $x$, and is thus at least
 $y = x + G$. Using Lemma \ref{eleminequality}
 and \eqref{complementsumidentity}, we obtain that 
\begin{equation}\label{justifylast}
 \frac{v(U)}{v(V)} > \frac{d+G}{11} > M, 
\end{equation}
 where the last inequality in \eqref{justifylast} is implied by 
 \eqref{Gstrict}. From \eqref{vEandTx}, we find that 
 $ v(E) < \frac{T_{x}}{M}$. 

 \ 

 So, if $E$ contains a prime greater than $x$, then Case 2 gives $ v(E) < \frac{T_{x}}{M}$, and, if each element of $E$ is at most $x$, 
 then Case 1 gives $ v(E) \leq \frac{T_{x}}{M}$, 
 with equality if and only if $E = \mathcal{B}_{x} \setminus A$. 
 Since $E$ was an arbitrary finite set of primes whose elements sum to $n$ 
 these two cases show that 
 $ v(E) \leq v(\mathcal{B}_{x} \setminus A) = \frac{T_{x}}{M} $ 
 for each such $E$, with equality only when $E = \mathcal{B}_{x} \setminus A$. 
 As above, the elements of $\mathcal{B}_{x} \setminus A$ sum to $n$. So, this set uniquely attains
 the maximum product, and, hence, by the definition of $P_{n}$, 
 we have that $P_n = \mathcal{B}_{x} \setminus A$, as desired. 
\end{proof}

 We are now in a position to prove our main result, as below, 
 and to solve Guy's problem in the affirmative. 

\begin{theorem}
 For each positive integer $r$, there exist infinitely many positive
 integers $n$ admitting partitions into distinct primes for which 
 $L(n) - K(n) \geq r$.
\end{theorem}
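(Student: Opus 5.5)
The plan is to combine Lemma \ref{lemmatd}, Lemma \ref{lemVinogradov}, Lemma \ref{qHlemma} and Lemma \ref{lemmaPn}. Fix $r \geq 1$ and set $t = r + 3$. By Lemma \ref{lemmatd} there is an odd integer $d$ whose product-minimal prime set $A$ has exactly $t$ elements. We may take $d$ as large as we like, so we choose it large enough that Lemma \ref{lemVinogradov} applies. Then $d = p_1 + p_2 + p_3$ for pairwise distinct primes $p_1, p_2, p_3$, all less than $d$. Put $M = v(A)$ and $H = 11M + d + 1$.

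By Lemma \ref{qHlemma} there are arbitrarily large primes $x > H$ (in particular $x > d$) such that $x$ is the only prime in $[x - H, x + H]$. Let $y$ be the next prime after $x$. Then $G = y - x > H > 11M$, so the hypothesis \eqref{Gstrict} holds. Let $n = \sigma_x - d$. Lemma \ref{lemmaPn} gives $P_n = \mathcal{B}_x \setminus A$, and therefore $K(n) = |\mathcal{B}_x| - t$. Different choices of $x$ give different values of $n$, since $\sigma_x$ is strictly increasing in $x$. This is where the infinitude in the statement comes from.

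For the lower bound on $L(n)$, note that $\{p_1, p_2, p_3\} \subseteq \mathcal{B}_x$, because each $p_i < d < x$. So the set $\mathcal{B}_x \setminus \{p_1, p_2, p_3\}$ consists of distinct primes and sums to $\sigma_x - d = n$. Hence $L(n) \geq |\mathcal{B}_x| - 3$, and
\[
L(n) - K(n) \geq (|\mathcal{B}_x| - 3) - (|\mathcal{B}_x| - t) = t - 3 = r .
\]

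The earlier lemmas carry essentially all of the difficulty, so the assembly is short. The main thing to check is that the parameters are mutually compatible. First, $d$ must be odd and large in order to apply Lemma \ref{lemVinogradov}, and Lemma \ref{lemmatd} supplies both conditions at once. Second, the gap after $x$ must exceed $11M$, where $M$ depends on $d$. This is why $d$ and $A$ are fixed first and $x$ is chosen afterwards via Lemma \ref{qHlemma} with $H$ depending on $M$. Uniqueness of $x$ in $[x - H, x + H]$ forces $y > x + H$, which gives the gap condition. Finally, the condition $x > d$ in Lemma \ref{lemmaPn} follows from $x > H > d$.
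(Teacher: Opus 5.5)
Your proposal is correct and follows the paper's proof essentially step for step. You set $t=r+3$, take a large odd $d$ whose product-minimal set $A$ has $t$ elements together with three distinct primes summing to $d$, choose $x$ via Lemma~\ref{qHlemma} so that Lemma~\ref{lemmaPn} gives $K(n)=|\mathcal{B}_x|-t$, and remove the three primes from $\mathcal{B}_x$ to get $L(n)\geq|\mathcal{B}_x|-3$. The only difference is cosmetic: you take $H=11M+d+1$ to force $x>d$, whereas the paper uses $H=11M$ and simply picks $x>d$ among the arbitrarily large primes that Lemma~\ref{qHlemma} supplies.
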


\begin{proof}
 Fix a positive integer $r$, and define $t = r + 3$. By Lemma \ref{lemmatd}, we are permitted to choose an odd integer $d$, sufficiently 
 large so that Lemma \ref{lemVinogradov} is applicable, such that the product-minimal prime set $A$ of $d$ has precisely $t$ elements, 
 i.e., so that $s(A) = d$ and $|A| = t$. We then define $M$ so that $M = v(A)$. 
 According to Lemma \ref{lemVinogradov}, there exists a set $C$ consisting
 of three distinct primes satisfying 
 $s(C) = d$ and $|C| = 3$. 
 We henceforth let $d$ and $A$ and $C$ be fixed. 

 Now, we proceed to apply 
 the Sierpi\'nski Lemma, as formulated in Lemma \ref{qHlemma}, letting $H = 11 M$. 
 So, there are infinitely many primes $x$ exceeding $d$ for which $x$ is the unique 
 prime in 
 $ [x - 11M, x + 11M]$. 
 So, for each such $x$, if $y$ is the next prime, then
 $ y - x > 11 M$. 
 We then set $n = \sigma_{x} - d$, with $m = |\mathcal{B}_{x}|$. 
 By Lemma \ref{lemmaPn}, we obtain that 
 $ P_n = \mathcal{B}_{x} \setminus A$. 
 Consequently, the equalities among
\begin{equation}\label{Pnmminust} 
 |P_n| = |\mathcal{B}_{x}| - |A| = m - t 
\end{equation} 
 hold. Each prime in $C$ is at most $d < x$, so that 
 $ C \subseteq \mathcal{B}_{x}$. 
 As a consequence, the prime set
 $ \mathcal{B}_{x} \setminus C $
 is such that 
 $ s(\mathcal{B}_{x} \setminus C) = \sigma_{x} - s(C) = \sigma_{x} - d = n $
 and is such that 
 $ |\mathcal{B}_{x} \setminus C| = m - 3$. 

 Now, recall the definitions of $S_n$ and $P_n$ 
 given in Section \ref{secintro}. By definition, 
 the cardinality of $S_n$ is at least that of every finite set of primes whose elements
 sum to $n$, 
 with 
\begin{equation}\label{cardSngeq} 
 |S_{n}| \geq |\mathcal{B}_{x} \setminus C| = m - 3. 
\end{equation}
 A combined application of \eqref{Pnmminust} and \eqref{cardSngeq}
 then give us that 
\begin{align*}
 L(n) - K(n) & = |S_n| - |P_n| \\ 
 & \geq (m - 3) - (m - t) \\ 
 & = t - 3 \\
 & = r.
\end{align*}

 There are infinitely many eligible primes $x$. 
 Also, recall that 
 $$ \sigma_{x} = s(\mathcal{B}_{x}) = \sum_{\substack{p \leq x \\ \text{$p$ prime}}} p.$$
 For eligible primes $x_1$ and $x_2$ such that $x_1 < x_2$, 
 we have that 
\begin{equation}\label{substackdiffer} 
 \sigma_{x_2} - \sigma_{x_1} = \sum_{\substack{x_1 < p \leq x_2 \\ \text{$p$ prime}}} p. 
\end{equation}
 Since the right-hand side of 
 \eqref{substackdiffer} containsat least the positive term $p = x_2$, since $x_2$ is prime,
 we find that $\sigma_{x_{2}} - \sigma_{x_{1}} \geq x_2 > 0$. 
 So, as $x$ increases through distinct primes, 
 the value $\sigma_x$ strictly increases. 
 So, since $d$ is fixed, the integers of the form $n = \sigma_x - d$ are distinct (as $x$ increases through distinct primes). 
 So, for $r$ as given above, infinitely many positive integers $n$ are such that 
 $L(n) - K(n) \geq r$. Since $r$ was arbitrary, we obtain the desired result. 
\end{proof}

\section{Conclusion}
 We briefly conclude with some further areas of study related to our construction given above. 

 Lemma \ref{qHlemma} and the associated Table \ref{TableqH} raise questions
 in regard to growth properties associated with primes of the form $q$ given in Lemma \ref{qHlemma}, 
 for fixed $H$. 
 In this direction, for a positive integer $H$, 
 and for a positive integer $n$, we let $q_{H}(n)$ denote 
 the $n^{\text{th}}$ smallest prime $q > H$
 such that $q$ is the unique prime in $[q - H, q + H]$. 
 Then, for example, the infinite array 
 $(q_{i}(j))$ begins with 
\begin{equation}\label{infinitearray}
 \left( \begin{matrix}
5 & 7 & 11 & 13 & 17 & 19 & 23 & 29 & 31 & \cdots \\
23 & 37 & 47 & 53 & 67 & 79 & 83 & 89 & 97 & \cdots \\
23 & 37 & 47 & 53 & 67 & 79 & 83 & 89 & 97 & \cdots \\
53 & 89 & 157 & 173 & 211 & 251 & 257 & 263 & 293 & \cdots \\
53 & 89 & 157 & 173 & 211 & 251 & 257 & 263 & 293 & \cdots \\
211 & 293 & 409 & 479 & 631 & 691 & 701 & 709 & 719 & \cdots \\
211 & 293 & 409 & 479 & 631 & 691 & 701 & 709 & 719 & \cdots \\
211 & 293 & 631 & 787 & 797 & 839 & 1249 & 1259 & 1399 & \cdots \\
211 & 293 & 631 & 787 & 797 & 839 & 1249 & 1259 & 1399 & \cdots \\
 \vdots & \vdots & \vdots & \vdots & \vdots & \vdots & \vdots & \vdots & \vdots & \ddots 
 \end{matrix} \right).
 \end{equation}
 Each of the rows of the above array agrees with an extant OEIS sequence (up to offsets). 
 For example, the second row agrees (up to an offset) with the OEIS sequence {\tt A007510}
 consisting of single primes. Similarly, the first column agrees with the OEIS sequence 
 {\tt A102723}, giving the smallest prime $a(n)$ such that $a(n) - x$ and $a(n) + x$, 
 for $x \in \{ 1, 2, \ldots, n \}$, are all composite. 
 Sierpi\'nski \cite{Sierpinski1948} proved, in an equivalent way, 
 that the integer sequence $(q_H(1))_{H \geq 1}$ is indeed infinite (i.e., that $q_H(1)$ does indeed exist for
 each positive integer $H$), 
 but estimating the growth of this sequence is very difficult, since this 
 is closely related to the behavior of prime gaps, i.e., since $q_H(1)$ is the smallest prime whose gaps immediately 
 to the left and right are btoh strictly greater than $H$. 
 The remaining columns of the above array do not appear to currently be in the OEIS, and the main diagonal 
 is not currently in the OEIS, and we encourage the investigation of these integer sequences. 
 
 Lemma \ref{lemlargeprimes} and the associated Table \ref{TableAq} raise questions 
 on the behavior of primes of the form 
 $q$, for a fixed set $A$ of primes that is product-minimal. 
 For fixed $A$, let $q_{A}(n)$ denote the $n^{\text{th}}$
 smallest prime such that $q > s(A)$ 
 and such that $A \cup \{ q \}$
 is product-minimal. 
 What can be said in regard to the growth of 
 $q_{A}(n)$ for arbitrary $n \in \mathbb{N}$ and for fixed $A$? 

 Also, Lemma \ref{lemmatd} and the associated Table \ref{Tabletd} raise questions concerning
 the behavior of the expressions of the forms $t$ and $d$ 
 involved in Lemma \ref{lemmatd}. 
 In this direction, for a fixed integer $t \geq 2$, 
 let $d_t(n)$ denote the $n^{\text{th}}$ smallest odd integer whose product-minimal prime set is of size $t$, 
 and let $A_t(n)$ denote the associated prime set. What can be said, for example, in regard to the behavior of $\max A_t(n)$
 for fixed $t \in \mathbb{N}_{\geq 2}$ and for arbitrary
 $n \in \mathbb{N}$? Or, say, for arbitrary $t \in \mathbb{N}_{\geq 2}$ and for fixed $n \in \mathbb{N}$? 
 What can be said in regard to the behavior of $d_t(n)$ for arbitrary $n \in \mathbb{N}$
 and fixed $t \in \mathbb{N}_{\geq 2}$, or for arbitrary $t \in \mathbb{N}_{\geq 2}$ and fixed $n \in \mathbb{N}$? 

 \subsection*{AI disclosure}
 As indicated above, our above solution to Guy's problem
 is based on our extensive interactions with GPT-6 Astra Pro, and this is described in more 
 detail below. All 
 AI-generated suggestions were substantially revised, corrected, and independently verified by the author, who assumes full responsibility 
 for the mathematical content. The author's use of AI is fully in accordance with editorial standards on the responsible, ethical, and 
 transparent use of AI. 

 On 2026-10-10, the author inputted a prompt into GPT-6 Astra Pro 
 containing an equivalent version of the problem from Guy's text 
 given by determining whether or not \eqref{quantifyGuy} holds
 (with definitions given for $L(n)$ and $K(n)$, and with an explicit attribution to Guy's text on unsolved problems in number theory), 
 concluding with an explicit request that GPT-6 Astra Pro solve the given problem. 
 After {\tt 8m 42s} of thinking, GPT-6 Astra Pro provided a response beginning with 
\begin{verbatim}
I believe the following gives an affirmative proof, though a 
claim settling this problem should be independently checked.
\end{verbatim}
 After extensive clarifications, corrections, revisions, suggestions, verifications, calculations, etc., through extensive 
 discussions between the author and 
 GPT-6 Astra Pro, 
 and after extensive and careful reviews of such discussions on the part of the author, 
 this provided a basis for the above solution, written by the author, 
 to Guy's problem. 

 For example, it was through the author's investigations 
 and computations associated with the infinite array illustrated in \eqref{infinitearray}
 that the author found (through the OEIS entry {\tt A102723}) 
 that the result given above as Lemma \ref{qHlemma} was due (in an equivalent way) to 
 Sierpi\'{n}ski \cite{Sierpinski1948}, but GPT-6 Astra Pro had not previously been aware of this (and also 
 was not able to access to the 
 corresponding journal article). 

\bibliographystyle{plain}
\bibliography{seprefe}

@book {Guy2004,
    AUTHOR = {Guy, Richard K.},
     TITLE = {Unsolved problems in number theory},
    SERIES = {Problem Books in Mathematics},
   EDITION = {Third},
 PUBLISHER = {Springer-Verlag, New York},
      YEAR = {2004},
     PAGES = {xviii+437},
      ISBN = {0-387-20860-7},
   MRCLASS = {11-01 (11-02)},
  MRNUMBER = {2076335},
MRREVIEWER = {Greg\ Martin},
       DOI = {10.1007/978-0-387-26677-0},
       URL = {https://doi.org/10.1007/978-0-387-26677-0},
}

@article{Dirichlet1837,
  author  = {Dirichlet, P. G. L.},
  title   = {{Beweis des Satzes, dass jede unbegrenzte
              arithmetische Progression, deren erstes Glied
              und Differenz ganze Zahlen ohne gemeinschaftlichen
              Factor sind, unendlich viele Primzahlen enth{\"a}lt}},
  journal = {Abhandlungen der K{\"o}niglich Preussischen
             Akademie der Wissenschaften},
  year    = {1837},
  pages   = {45--81}}

@article {Vaughan2008,
    AUTHOR = {Vaughan, R. C.},
     TITLE = {On the number of partitions into primes},
   JOURNAL = {Ramanujan J.},
  FJOURNAL = {Ramanujan Journal. An International Journal Devoted to the
              Areas of Mathematics Influenced by Ramanujan},
    VOLUME = {15},
      YEAR = {2008},
    NUMBER = {1},
     PAGES = {109--121},
      ISSN = {1382-4090,1572-9303},
   MRCLASS = {11P82 (11N37 11P55)},
  MRNUMBER = {2372796},
MRREVIEWER = {Yifan\ Yang},
       DOI = {10.1007/s11139-007-9037-5},
       URL = {https://doi.org/10.1007/s11139-007-9037-5},
}

@book {Guy1994,
    AUTHOR = {Guy, Richard K.},
     TITLE = {Unsolved problems in number theory},
    SERIES = {Problem Books in Mathematics},
   EDITION = {Second},
      NOTE = {Unsolved Problems in Intuitive Mathematics, I},
 PUBLISHER = {Springer-Verlag, New York},
      YEAR = {1994},
     PAGES = {xvi+285},
      ISBN = {0-387-94289-0},
   MRCLASS = {11-01 (00A07 11-02)},
  MRNUMBER = {1299330},
MRREVIEWER = {B.\ M. M. de Weger},
       DOI = {10.1007/978-1-4899-3585-4},
       URL = {https://doi.org/10.1007/978-1-4899-3585-4},
}

@article {Kumar2021,
    AUTHOR = {Kumar, Abhimanyu},
     TITLE = {Analytic study of norms of prime partitions},
   JOURNAL = {Bull. Sci. Math.},
  FJOURNAL = {Bulletin des Sciences Math\'ematiques},
    VOLUME = {171},
      YEAR = {2021},
     PAGES = {Paper No. 103038, 12},
      ISSN = {0007-4497,1952-4773},
   MRCLASS = {05A17 (11A41)},
  MRNUMBER = {4295120},
       DOI = {10.1016/j.bulsci.2021.103038},
       URL = {https://doi.org/10.1016/j.bulsci.2021.103038},
}

@article {Sierpinski1948,
    AUTHOR = {Sierpi\'nski, W.},
     TITLE = {Remarque sur la r\'epartition des nombres premiers},
   JOURNAL = {Colloq. Math.},
  FJOURNAL = {Colloquium Mathematicum},
    VOLUME = {1},
      YEAR = {1948},
     PAGES = {193--194},
      ISSN = {0010-1354,1730-6302},
   MRCLASS = {10.0X},
  MRNUMBER = {28332},
MRREVIEWER = {R.\ D.\ James},
       DOI = {10.4064/cm-1-3-193-194},
       URL = {https://doi.org/10.4064/cm-1-3-193-194},
}

@article{DelegliseNicolas2015,
 author = {Del{\'e}glise, Marc and Nicolas, Jean-Louis},
 title = {On the largest product of primes with bounded sum},
 fjournal = {Journal of Integer Sequences},
 journal = {J. Integer Seq.},
 issn = {1530-7638},
 volume = {18},
 number = {2},
 pages = {article 15.2.8, 35},
 year = {2015},
 language = {English},
 zbMATH = {6427371},
 Zbl = {1378.11013}
}

@article{Vinogradov1937,
 author = {Vinogradov, I. M.},
 title = {Some theorems concerning the theory of primes.},
 fjournal = {Recueil Math{\'e}matique. Nouvelle S{\'e}rie},
 journal = {Rec. Math. Moscou, n. Ser.},
 volume = {2},
 pages = {179--195},
 year = {1937},
 language = {English},
 zbMATH = {2520173},
 JFM = {63.0131.05}
}

@article {DelegliseNicolas2013,
    AUTHOR = {Del\'eglise, M. and Nicolas, J.-L.},
     TITLE = {Maximal product of primes whose sum is bounded},
   JOURNAL = {Proc. Steklov Inst. Math.},
  FJOURNAL = {Proceedings of the Steklov Institute of Mathematics},
    VOLUME = {282},
      YEAR = {2013},
     PAGES = {S73--S102},
      ISSN = {0081-5438,1531-8605},
   MRCLASS = {11N25 (11-04)},
  MRNUMBER = {3457246},
MRREVIEWER = {Harald\ A.\ Helfgott},
       DOI = {10.1134/S0081543813070055},
       URL = {https://doi.org/10.1134/S0081543813070055},
}

\end{document}